\theoremstyle{definition}
\newtheorem{theorem}{Theorem}
\newtheorem{defi}{Definition}
\newtheorem{rmk}{Remark}
\newtheorem{lem}{Lemma}
\numberwithin{defi}{section} 
\numberwithin{theorem}{section}  
\numberwithin{lem}{section}
\numberwithin{equation}{section} 
\numberwithin{rmk}{section}
\begin{document}

\title{A fixed point result of Kannan-type for multi-valued mapping on fuzzy metric spaces \footnote{ASM Subject Classifications: 54H25, 47H10 \\ keywords: fixed point theorem, fuzzy metric space, Kannan-type contraction}}
 
\author{
{Shunya Hashimoto \footnote{Department of Mathematics, Faculty of Science, Saitama University, Saitama 338-8570, Japan, email: ess71004@mail.saitama-u.ac.jp}}  \and {Aqib Saghir \footnote{Department of Mathematics, Faculty of Science, Saitama University, Saitama 338-8570, Japan, email: saghir.a.460@ms.saitama-u.ac.jp}}
}

\date{}

\maketitle

\begin{abstract}
We prove a Kannan-type fixed point theorem for multi-valued mappings on G-complete fuzzy metric spaces.
The proof uses the Hausdorff fuzzy metric space which was introduced by Rodriguez-Lopez and Romaguera \cite{Lopez}.
\end{abstract}

\section{Introduction}

Fixed point theory holds great importance in both mathematics and applied sciences offering a wide range of applications in ensuring the existence and uniqueness of solutions in differential and integral equations \cite{{Baklouti1}, {BC}}. There was a strong need to simplify and unify these ideas and principles. In his 1906 doctoral thesis, M. Frechet \cite{Frechet} effectively tackled this matter by introducing the concept of a metric space, thereby satisfying this pressing need. Based on this idea Banach \cite{Banach} applied it to establish the renowned fixed point theorem in 1922, marking a significant progression in the evolution of various extensions of metric spaces.

In 1969, Nadler \cite{Nadler} introduced a multi-valued form of Banach's theorem for metric spaces, incorporating the Hausdorff distance. Lopez et al.\cite{Lopez} extended this concept to include fuzzy metric spaces (FMS). They explored a fuzzy Hausdorff distance on the set of compact subsets within such spaces. In this context, we employ the definition provided in \cite{Lopez} to establish a principle for multi-valued fuzzy contraction mappings. It's worth mentioning that the introduction of fuzzy sets is given by L. A. Zadeh \cite{Zadeh} in 1965 which marked a significant achievement. Fuzzy concepts have made progress in almost every theoretical and applied mathematics field. Many authors within the fields of topology and analysis have since utilized this concept extensively. Kannan \cite{Kannan} extended the Banach Contraction Principle in 1968 and derived several fixed point outcomes. After Kannan, numerous mathematicians referenced as \cite{{Ciric}, {Goebel}, {Reich}} continued to explore this field and made their own valuable contributions. Banach's contraction mappings are recognized for their continuity, whereas Kannan-type mappings are not always continuous. Additionally, it's important to highlight that Banach's contraction property does not provide a characterization of metric completeness. Indeed, Subrahmanyam \cite{Subrahmanyam} proved that a metric space is complete if and only if every Kannan-type mapping possesses a fixed point.

In 1975, Michalek and Kramosil \cite{Kramosil} further employed fuzzy sets to introduce the concept of a fuzzy metric space. Additionally, George and Veermani \cite{George} in 1994 modified the characterization of fuzzy metric spaces and introduced the Hausdorff topology of fuzzy metric space. This improvement has revealed significant implications within the realm of quantum particle physics \cite{Tanaka}. As a result, many researchers have demonstrated various fixed point findings in fuzzy metric space as given in \cite{{Diamond}, {Vetro}, {Gupta}, {Mishra}, {Rold}}. S. Romaguera \cite{Salvador} has recently introduced a concept of Kannan contraction for fuzzy metric spaces, enabling the extension of Kannan's fixed point theorem to the fuzzy metric domain. This approach also facilitates the characterization of fuzzy metric completeness through an adapted version of Subrahmanyam's criterion for metric completeness.

In this paper, we provide a multi-valued variant of the Kannan contraction in the setting of fuzzy metric spaces (FMS). We extend the notion of contraction proposed by S. Romaguera to the multi-valued mapping in the context of fuzzy Hausdorff distance. 
Unlike the proof with single-valued mapping, in the case of multi-valued mapping, the construction of a Cauchy sequence cannot be taken as an iterated sequence of mapping $T$.
Therefore, we constructed the Cauchy sequence using Lemma \ref{lem2}, which is satisfied in the Hausdorff fuzzy metric space in a collection of compact subsets.
As a result, we show a Kannan-type fixed point theorem for multi-valued mappings on G-complete fuzzy metric space.
Refer to \cite{{Engelking},{Klement}} for the notion of general topology and continuous $t$-norms.

First, we introduce the fuzzy metric space in the sense of George-Veeramani \cite{George}.

\begin{defi}
A binary operation $*:[0,1]\times [0,1]\to [0,1]$ is called a continuous $t$-norm if it
satisfies the following conditions:
\begin{enumerate}
\item[(i)] $*$ is associative and commutative,
\item[(ii)] $*$ is continuous,
\item[(iii)] $a*1=a$ for all $a\in[0,1]$,
\item[(iv)] $a*b\le c*d$ wherever $a\le c$ and $b\le d$ for all $a,b,c,d\in[0,1]$.
\end{enumerate}
\end{defi}
\begin{defi}
The triple $(X,M,*)$ is said to be a fuzzy metric space if $X$ is an arbitrary non-empty set, $*$ is a continuous $t$-norm and $M$ is a function from $X\times X\times (0,\infty)$ to $[0,1]$ such that for all $x,y,z\in X$ and $t,s>0$:
\begin{enumerate}
\item[(F1)] $M(x,y,t)>0$,
\item[(F2)] $M(x,y,t)=1$ for all $t>0$ if and only if $x=y$,
\item[(F3)] $M(x,y,t)=M(y,x,t)$,
\item[(F4)] $M(x,z,t+s)\ge M(x,y,t)*M(y,z,s)$,
\item[(F5)] $M(x,y,\cdot):(0,\infty)\to[0,1]$ is continuous.
\end{enumerate}
\end{defi}
\begin{rmk}
\label{rmk1}
For each $x,y \in X$, $M(x,y,t)$ is a non-decreasing function on $(0,\infty)$.
\end{rmk}
\begin{rmk}
\label{rmk2}
From the continuity of $M$ and Remark \ref{rmk1}, for a given $x,y\in X$ if $M(x,y,t)>1-t$ for any $t>0$, then $x=y$.
\end{rmk}
We follow the paper by Grabiec \cite{Grabiec} to define the G-Cauchy sequence and G-completeness.
\begin{defi}
Let $(X,M,*)$ be a fuzzy metric space and $\{x_n\}$ be a sequence in $X$.
\begin{enumerate}
\item The sequence $\{x_n\}$ is said to be convergent if there exists $x\in X$ such that $\displaystyle \lim_{n\to\infty}M(x_n,x,t)=1$ for $t>0$.
\item The sequence $\{x_n\}$ is said to be a G-Cauchy sequence if $\displaystyle \lim_{n\to\infty}M(x_n,x_{n+q},t)=1$ for $t>0$ and $q\in\mathbb{N}$.
\item A fuzzy metric space in which every G-Cauchy sequence is convergent is called a G-complete fuzzy metric space.
\end{enumerate}
\end{defi}
\begin{defi}
Let $A$ be a non-empty subset of a fuzzy metric space $(X,M,*)$ and $t>0$.
The fuzzy distance $\mathcal{M}$ between an element $\rho\in X$ and the subset $A\subset X$ is given by
\[ \mathcal{M}(\rho,A,t)=\sup\{ M(\rho,\mu,t) : \mu\in A\}. \]
We also define that $\mathcal{M}(\rho,A,t)=\mathcal{M}(A,\rho,t)$.
\end{defi}
\begin{rmk}
\label{rmk3}
From Remark \ref{rmk1}, for any $A\subset X$ and $\mu\in X$, $\mathcal{M}(A,\mu,t)$ is a non-decreasing function on $(0,\infty)$.
\end{rmk}
\begin{lem}
\label{lem1}
If $A\in Cl(X)$, then $\rho\in A$ if and only if $\mathcal{M}(A,\rho,t)=1$ for all $t>0$, where $Cl(X)$ is the collection of nonempty closed subsets of $X$.
\end{lem}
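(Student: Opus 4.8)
The plan is to prove the two implications separately, with the forward direction being immediate and the reverse direction requiring a diagonal construction that exploits the monotonicity of $M(\rho,\cdot,t)$ in $t$.

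For the forward implication, suppose $\rho\in A$. Then $\rho$ itself is an admissible choice of $\mu$ in the supremum defining $\mathcal{M}(A,\rho,t)=\sup\{M(\rho,\mu,t):\mu\in A\}$, and by (F2) we have $M(\rho,\rho,t)=1$ for every $t>0$. Since $M$ takes values in $[0,1]$, the supremum cannot exceed $1$, so $\mathcal{M}(A,\rho,t)=1$ for all $t>0$. This requires no further work.

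For the reverse implication, suppose $\mathcal{M}(A,\rho,t)=1$ for all $t>0$; I want to produce a sequence in $A$ converging to $\rho$ and then invoke closedness. The point is that the defining supremum, being equal to $1$, is approached arbitrarily well: for each $n\in\mathbb{N}$, using the hypothesis at the time parameter $t=1/n$, I can select $\mu_n\in A$ with $M(\rho,\mu_n,1/n)>1-1/n$. The heart of the argument is then to upgrade this to convergence for every fixed $t>0$. Here I would use Remark~\ref{rmk1}: for a fixed $t>0$ and $\varepsilon>0$, choosing $N$ with $1/N\le \min\{t,\varepsilon\}$ gives, for all $n\ge N$, the chain $M(\rho,\mu_n,t)\ge M(\rho,\mu_n,1/n)>1-1/n\ge 1-\varepsilon$ by the non-decreasing property in the time variable. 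Hence $\lim_{n\to\infty}M(\rho,\mu_n,t)=1$ for every $t>0$, which is exactly the statement that $\mu_n\to\rho$.

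Finally, since $\{\mu_n\}\subset A$ and $A$ is closed, so that it contains the limits of its convergent sequences in the George--Veeramani topology, I conclude $\rho\in A$. The main obstacle I anticipate is precisely the reverse direction's need to obtain convergence simultaneously for all $t>0$ from a supremum condition: a naive choice of $\mu_n$ at a single fixed $t$ would only control that one time parameter, and it is the monotonicity of $M$ in $t$ furnished by Remark~\ref{rmk1} that lets a single diagonal sequence built at $t=1/n$ serve for all larger $t$ at once.
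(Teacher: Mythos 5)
Your proof is correct. The paper states Lemma \ref{lem1} without proof (it is imported from the Hausdorff fuzzy metric literature), so there is nothing to compare against; both directions of your argument are sound, and the key step --- building the sequence $\mu_n$ at $t=1/n$ and using the monotonicity of $M$ in $t$ from Remark \ref{rmk1} to get convergence for every fixed $t>0$ --- is exactly the standard way to handle the ``for all $t$'' quantifier. The only cosmetic remark is that one can avoid sequences altogether by observing that $\mathcal{M}(A,\rho,t)>1-\varepsilon$ for all $t,\varepsilon$ says precisely that every basic open ball $B(\rho,\varepsilon,t)$ meets $A$, i.e.\ $\rho\in\overline{A}=A$; but since the George--Veeramani topology is first countable, your sequential formulation of closedness is equally valid.
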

\begin{defi}
Let $(X,M,*)$ be a fuzzy metric space. Define a function $\Theta_{\mathcal{M}}$ on $\hat{C}_0(X)\times \hat{C}_0(X)\times (0,\infty)$ by
\[ \Theta_{\mathcal{M}}(A,B,t)=\min\{ \inf_{\rho\in A}\mathcal{M}(\rho,B,t),\inf_{\mu\in B}\mathcal{M}(A,\mu,t)\}, \]
for all $A,B\in \hat{C}_0(X)$ and $t>0$, where $\hat{C}_0(X)$ is the collection of nonempty compact subset of $X$.
The triple $(\hat{C}_0(X),\Theta_{\mathcal{M}},*)$ is called a Hausdorff fuzzy metric space.
\end{defi}
\begin{rmk}
From Remark \ref{rmk3}, for any $A,B\in \hat{C}_0(X)$, $\Theta_{\mathcal{M}}(A,B,t)$ is a non-decreasing function on $(0,\infty)$.
\end{rmk}
\begin{lem}
\label{lem2}
Let $(X,M,*)$ be a fuzzy metric space such that $(\hat{C}_0(X),\Theta_{\mathcal{M}},*)$ is a Hausdorff fuzzy metric space on $\hat{C}_0(X)$. Assume that for all $A,B\in\hat{C}_0(X)$, for each $\rho\in A$ and for $t>0$, there exists $\mu_{\rho}\in B$ so that $\mathcal{M}(\rho,B,t)=M(\rho,\mu_{\rho},t)$. Then
\[ \Theta_{\mathcal{M}}(A,B,t)\le M(\rho,\mu_{\rho},t), \]
holds.
\end{lem}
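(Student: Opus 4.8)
The plan is to establish the inequality by a direct unwinding of the definition of $\Theta_{\mathcal{M}}$, followed by a single appeal to the stated hypothesis. To set up, fix $A,B\in\hat{C}_0(X)$, a real number $t>0$, and a point $\rho\in A$; let $\mu_{\rho}\in B$ be the element supplied by the assumption, so that $\mathcal{M}(\rho,B,t)=M(\rho,\mu_{\rho},t)$.

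The first step is to observe that, by definition, $\Theta_{\mathcal{M}}(A,B,t)$ is the minimum of the two quantities $\inf_{\rho'\in A}\mathcal{M}(\rho',B,t)$ and $\inf_{\mu\in B}\mathcal{M}(A,\mu,t)$, and a minimum never exceeds either of its arguments. Discarding the second term, this gives
\[
\Theta_{\mathcal{M}}(A,B,t)\le \inf_{\rho'\in A}\mathcal{M}(\rho',B,t).
\]
The second step is to note that an infimum over $A$ is a lower bound for the corresponding family, so evaluating at the specific point $\rho\in A$ yields $\inf_{\rho'\in A}\mathcal{M}(\rho',B,t)\le \mathcal{M}(\rho,B,t)$. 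The final step is simply to substitute the hypothesis $\mathcal{M}(\rho,B,t)=M(\rho,\mu_{\rho},t)$ and chain the inequalities to conclude that $\Theta_{\mathcal{M}}(A,B,t)\le M(\rho,\mu_{\rho},t)$.

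I do not expect any genuine obstacle here, since the statement is essentially a bookkeeping consequence of the two order relations ``a minimum lies below each of its arguments'' and ``an infimum lies below each sampled value.'' The only point demanding attention is to select the correct branch of the minimum: one must bound $\Theta_{\mathcal{M}}$ by the infimum taken over $A$ rather than over $B$, because it is that term alone which ranges over $\rho\in A$ and features $\mathcal{M}(\rho,B,t)$, thereby matching the quantity controlled by the hypothesis. Note also that the existence of $\mu_{\rho}$ attaining the supremum is built into the assumption, so the compactness of $B$ need not be invoked explicitly in this particular argument.
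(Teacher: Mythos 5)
Your proof is correct and is exactly the intended argument; the paper states Lemma \ref{lem2} without proof, and the natural justification is precisely your chain $\Theta_{\mathcal{M}}(A,B,t)\le \inf_{\rho'\in A}\mathcal{M}(\rho',B,t)\le \mathcal{M}(\rho,B,t)=M(\rho,\mu_{\rho},t)$. Your remark about choosing the branch of the minimum that ranges over $A$ is the right point of care, and nothing further is needed.
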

We introduce the Kannan-type fixed point theorem.
\begin{theorem}
(Kannan \cite{Kannan}) Let $(X,d)$ be a complete metric space and let $T:X\to X$ be a mapping such that there exists a constant $c\in(0,\frac{1}{2})$ satisfying, for any $x,y\in X$,
\[ d(Tx,Ty)\le c[d(x,Tx)+d(y,Ty)]. \]
Then, $T$ has a unique fixed point.
\end{theorem}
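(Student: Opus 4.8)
The plan is to follow the classical Picard-iteration strategy adapted to the Kannan condition. Starting from an arbitrary point $x_0\in X$, I would define the iterated sequence $x_{n+1}=Tx_n$ and first establish that consecutive terms contract geometrically. Applying the hypothesis to the pair $(x_{n-1},x_n)$ gives
\[ d(x_n,x_{n+1})=d(Tx_{n-1},Tx_n)\le c\bigl[d(x_{n-1},x_n)+d(x_n,x_{n+1})\bigr], \]
and moving the $d(x_n,x_{n+1})$ term to the left yields $d(x_n,x_{n+1})\le h\,d(x_{n-1},x_n)$ with $h=c/(1-c)$. The crucial observation is that the restriction $c\in(0,\tfrac12)$ is exactly what forces $h\in(0,1)$, so that by induction $d(x_n,x_{n+1})\le h^n d(x_0,x_1)$.

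Next I would upgrade this to a Cauchy estimate. For $m>n$ the triangle inequality together with the geometric decay gives $d(x_n,x_m)\le\sum_{k=n}^{m-1}d(x_k,x_{k+1})\le\frac{h^n}{1-h}d(x_0,x_1)$, which tends to $0$ as $n\to\infty$. Hence $\{x_n\}$ is a Cauchy sequence, and by completeness of $(X,d)$ it converges to some $x^*\in X$.

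To see that $x^*$ is a fixed point, I would bound $d(x^*,Tx^*)$ by inserting $x_{n+1}$ and applying the Kannan inequality to the pair $(x_n,x^*)$:
\[ d(x^*,Tx^*)\le d(x^*,x_{n+1})+d(Tx_n,Tx^*)\le d(x^*,x_{n+1})+c\bigl[d(x_n,x_{n+1})+d(x^*,Tx^*)\bigr]. \]
Rearranging to isolate $(1-c)d(x^*,Tx^*)$ and letting $n\to\infty$ (both $d(x^*,x_{n+1})$ and $d(x_n,x_{n+1})$ vanish) forces $d(x^*,Tx^*)=0$, so $Tx^*=x^*$. Finally, uniqueness is immediate: if $x^*$ and $y^*$ are both fixed, then $d(x^*,y^*)=d(Tx^*,Ty^*)\le c[d(x^*,Tx^*)+d(y^*,Ty^*)]=0$.

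The main obstacle --- in fact the only genuinely delicate point --- is the fixed-point step, where one must avoid assuming continuity of $T$, since Kannan-type maps need not be continuous. The trick is to estimate $d(x^*,Tx^*)$ directly and absorb the self-referential $c\,d(x^*,Tx^*)$ term into the left-hand side, rather than attempting to pass the limit through $T$. Everything else is routine: the geometric decay, the Cauchy summation, and the one-line uniqueness argument are all standard.
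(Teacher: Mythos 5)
Your proof is correct and complete: the geometric decay with ratio $h=c/(1-c)<1$, the Cauchy summation, the absorption trick for $d(x^*,Tx^*)$ that avoids any continuity assumption on $T$, and the one-line uniqueness argument are all sound. The paper itself states this theorem only as a cited classical result of Kannan and gives no proof, so there is nothing to compare against; your argument is the standard one and would serve as a correct proof.
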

Now we define the Kannan contraction of a multi-valued mapping based on Romaguera's definition of the Kannan contraction of a single-valued mapping \cite{Salvador} as follows.
\begin{defi}
Let $(X,M,*)$ be a fuzzy metric space. We say that a multi-valued mapping $T:X\to \hat{C}_0(X)$ is a (1)-Kannan contraction on $X$ if there is a constant $c\in (0,1)$ such that for any $x,y\in X$ and $t>0$,
\begin{align}
\label{kannan}
\min\{ \mathcal{M}(x,Tx,t),\mathcal{M}(y,Ty,t)\}>1-t \Rightarrow \Theta_{\mathcal{M}}(Tx,Ty,ct)>1-ct.
\end{align}
\end{defi}
We call ($s$)-Kannan contraction if the constant can be taken in the range $(0,s)$.

\section{Main result}

\indent
In this section, we prove a Kannan-type fixed point theorem for multi-valued mappings on G-complete fuzzy metric spaces.

Recall that, given a multi-valued mapping $T : X\to \hat{C}_0(X)$, a point $z$ is said to be a fixed point of $T$ if $z\in Tz$.

\begin{theorem}
Let $(X,M,*)$ be a G-complete fuzzy metric space and $(\hat{C}_0(X),\Theta_{\mathcal{M}},*)$ be a Hausdorff fuzzy metric space. Let $T : X\to \hat{C}_0(X)$ be a multi-valued (1)-Kannan contraction mapping, then $T$ has a fixed point.
\end{theorem}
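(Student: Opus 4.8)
The plan is to fix $x_0\in X$, construct a sequence $\{x_n\}$ with $x_{n+1}\in Tx_n$, show it is G-Cauchy, pass to a limit $z$ by G-completeness, and finally verify $z\in Tz$. Two observations organize the whole argument. Since $M>0$ by (F1), for every $t>1$ we have $1-t<0$, so the hypothesis $\min\{\mathcal{M}(x,Tx,t),\mathcal{M}(y,Ty,t)\}>1-t$ of \eqref{kannan} holds for all $x,y$; hence $\Theta_{\mathcal{M}}(Tx,Ty,ct)>1-ct$ is available unconditionally on the range $t>1$. Moreover, writing $c_n(t):=\mathcal{M}(x_n,Tx_n,t)$ and noting $x_n\in Tx_{n-1}$, Lemma \ref{lem2} together with the definition of $\Theta_{\mathcal{M}}$ as a minimum of infima gives $c_n(t)\ge\Theta_{\mathcal{M}}(Tx_{n-1},Tx_n,t)$. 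Applying \eqref{kannan} to the pair $(x_{n-1},x_n)$ and combining with this inequality yields the recursion that if $\min\{c_{n-1}(t),c_n(t)\}>1-t$, then $c_n(ct)>1-ct$.

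I would then drive the thresholds to zero geometrically. Each $c_n$ trivially satisfies $c_n(t)>1-t$ for $t>1$; feeding this into the recursion and inducting on $n$, I expect to establish $c_n(t)>1-t$ for all $t>\delta_n$ with $\delta_n=c^n\to0$. To turn this control of set-distances into the control of consecutive point-distances that a Cauchy estimate requires, at each stage I would select $x_{n+1}\in Tx_n$ attaining the supremum defining $\mathcal{M}(x_n,Tx_n,t_n)$ at a single $t_n$ slightly above $\delta_n$; such a maximizer exists because $Tx_n$ is compact, which is precisely the standing hypothesis of Lemma \ref{lem2}. By Remark \ref{rmk1} this propagates $M(x_n,x_{n+1},t_n)=c_n(t_n)>1-t_n$ to all $t\ge t_n$, so that $M(x_n,x_{n+1},t)\to1$ for each fixed $t$ as $t_n\downarrow0$. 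Decomposing $t=\sum_{i=0}^{q-1}t/q$ in (F4) and using the continuity of $*$ with $a*1=a$, the resulting product of $q$ nearly-one factors tends to $1$; hence $\lim_n M(x_n,x_{n+q},t)=1$, so $\{x_n\}$ is G-Cauchy and converges to some $z\in X$.

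The last and hardest step is to prove $z\in Tz$, equivalently, by Lemma \ref{lem1}, that $\mathcal{M}(z,Tz,t)=1$ for all $t$. Re-running the bootstrapping on the pair $(x_n,z)$ — legitimate once one observes that $\mathcal{M}(z,Tz,\cdot)>1-t$ also holds trivially for $t>1$ — I would first show $\mathcal{M}(z,Tz,t)>1-t$ on each range $t>c^k$, hence for all $t>0$; the inductive step couples $\mathcal{M}(x_{n+1},Tz,s)\ge\Theta_{\mathcal{M}}(Tx_n,Tz,s)$ with the inequality $\mathcal{M}(z,Tz,r+s)\ge M(z,x_{n+1},r)*\mathcal{M}(x_{n+1},Tz,s)$ coming from (F4), letting $n\to\infty$ via $x_{n+1}\to z$. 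The delicate point is that $\mathcal{M}(z,Tz,t)>1-t$ for all $t$ does not by itself force $\mathcal{M}(z,Tz,t)=1$, because the point of $Tz$ nearly realizing the supremum varies with $t$, whereas Remark \ref{rmk2} concerns a single fixed pair of points. I would resolve this by instead driving the threshold in $\Theta_{\mathcal{M}}(Tx_n,Tz,\cdot)$ down to $0$ to obtain $\mathcal{M}(x_{n+1},Tz,t)\to1$ for each fixed $t$, and then invoking (F4) in the form $\mathcal{M}(z,Tz,t+r)\ge M(z,x_{n+1},t)*\mathcal{M}(x_{n+1},Tz,r)$: as $n\to\infty$ both factors tend to $1$, giving $\mathcal{M}(z,Tz,u)=1$ for all $u>0$, and Lemma \ref{lem1} then yields $z\in Tz$.
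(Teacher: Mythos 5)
Your proposal is correct and follows essentially the same route as the paper: exploit that the hypothesis of \eqref{kannan} is vacuous for $t>1$ since $M>0$, contract the threshold geometrically to $c^n$ to get a G-Cauchy sequence of iterates $x_{n+1}\in Tx_n$, and then run a second threshold-contraction induction on the pair $(x_n,z)$ to obtain $\mathcal{M}(z,Tz,t)>1-t$ for every $t>0$. The only place you go beyond the paper is the final upgrade from $\mathcal{M}(z,Tz,t)>1-t$ to $\mathcal{M}(z,Tz,t)=1$ via $\mathcal{M}(z,Tz,t+r)\ge M(z,x_{n+1},t)*\mathcal{M}(x_{n+1},Tz,r)$ before citing Lemma \ref{lem1} --- a legitimate extra step (the paper applies Lemma \ref{lem1} directly, which can be justified because $Tz$ is closed and $\mathcal{M}(z,Tz,t)>1-t$ for all $t$ already forces $z\in\overline{Tz}$), but not a departure in method.
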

\begin{proof}
Take any $x_0\in X$. Let $x_1\in X$ such that $x_1\in Tx_0$.
By Lemma \ref{lem2}, we can choose $x_2\in Tx_1$ such that for all $t>0$,
\[ M(x_1,x_2,t)\ge \Theta_{\mathcal{M}}(Tx_0,Tx_1,t). \]
Inductively, we have $x_{n+1}\in Tx_n$ satisfying
\[ M(x_n,x_{n+1},t)\ge \Theta_{\mathcal{M}}(Tx_{n-1},Tx_n,t), \ \forall n\in\mathbb{N}. \]
Fix $t_0>1$. For any $x,y\in X$ we have
\begin{align}
\label{eq1}
\mathcal{M}(x,Tx,t_0)>1-t_0, \quad \mathcal{M}(y,Ty,t_0)>1-t_0.
\end{align}
Then, from the assumption, we obtain
\[ \Theta_{\mathcal{M}}(Tx,Ty,ct_0)>1-ct_0. \]
In particular, we have
\[ \Theta_{\mathcal{M}}(Tx_0,Tx_1,ct_0)>1-ct_0, \quad \Theta_{\mathcal{M}}(Tx_1,Tx_2,ct_0)>1-ct_0. \]
Therefore, from
\begin{align*}
\mathcal{M}(x_1,Tx_1,ct_0)&\ge M(x_1,x_2,ct_0)\ge \Theta_{\mathcal{M}}(Tx_0,Tx_1,ct_0)>1-ct_0, \\
\mathcal{M}(x_2,Tx_2,ct_0)&\ge M(x_2,x_3,ct_0)\ge \Theta_{\mathcal{M}}(Tx_1,Tx_2,ct_0)>1-ct_0,
\end{align*}
and the assumption, we obtain
\[ \Theta_{\mathcal{M}}(Tx_1,Tx_2,c^2t_0)>1-c^2t_0. \]
Similarly,
\[ \Theta_{\mathcal{M}}(Tx_2,Tx_3,c^2t_0)>1-c^2t_0. \]
So, we have
\begin{align*}
\mathcal{M}(x_2,Tx_2,c^2t_0)&\ge M(x_2,x_3,c^2t_0)\ge \Theta_{\mathcal{M}}(Tx_1,Tx_2,c^2t_0)>1-c^2t_0, \\
\mathcal{M}(x_3,Tx_3,c^2t_0)&\ge M(x_3,x_4,c^2t_0)\ge \Theta_{\mathcal{M}}(Tx_2,Tx_3,c^2t_0)>1-c^2t_0.
\end{align*}
By repeating $n$ times we obtain
\[ M(x_n,x_{n+1},c^nt_0)>1-c^nt_0. \]
Here, given $t>0$, there is $n(t)\in \mathbb{N}$ such that $c^nt_0<t$ for all $n\ge n(t)$.
Therefore, considering $\frac{t}{q}$ as $t$ for $q\in\mathbb{N}$, we have
\begin{align*}
M(x_n,x_{n+q},t)&\ge M\left(x_n,x_{n+1},\frac{t}{q}\right)*M\left(x_{n+1},x_{n+2},\frac{t}{q}\right) \\
& \qquad *\cdots *M\left( x_{n+q-1},x_{n+q},\frac{t}{q}\right) \\
&\ge M(x_n,x_{n+1},c^nt_0)*M(x_{n+1},x_{n+2},c^{n+1}t_0) \\
& \qquad *\cdots *M(x_{n+q-1},x_{n+q},c^{n+q-1}t_0) \\
&>(1-c^{n}t_0)*(1-c^{n+1}t_0)*\cdots *(1-c^{n+q-1}t_0),
\end{align*}
for any $n\ge n(\frac{t}{q})$.
So, taking the limit as $n\to\infty$, $(x_n)_{n\in\mathbb{N}}$ is a $G$-Cauchy sequence in $(X,M,*)$.
Then, there is $z\in X$ such that the sequence $(x_n)_{n\in\mathbb{N}}$ converges to $z$.

Next, we prove that $z$ is a fixed point of $T$.
Fix $r,s>0$ such that $c<s<r<1$.

First, we show
\begin{align}
\label{cl1}
\mathcal{M}(z,Tz,r^kt_0)\ge 1-r^kt_0,
\end{align}
for any $k\in\mathbb{N}$. We can assume $r^kt_0\le 1$ for all $k\in\mathbb{N}$ since the opposite case gives \eqref{cl1} obviously.

For each $k\in\mathbb{N}$ we define
\[ A_{k,r,s}:=\{ \varepsilon\in(0,1) : \varepsilon+sr^{k-1}t_0<r^kt_0\}. \]
To show \eqref{cl1} by induction, let $k=1$.
Then, by (1.2) we have
\[ \mathcal{M}(z,Tz,t_0)>1-t_0, \quad \mathcal{M}(x_n,Tx_n,t_0)>1-t_0. \]
So, by the definition of $\Theta_{\mathcal{M}}$ and condition \eqref{kannan}, we obtain
\begin{align*}
\mathcal{M}(Tz,x_{n+1},st_0)&\ge \mathcal{M}(Tz,x_{n+1},ct_0)\ge \inf_{\rho\in Tx_{n}}\mathcal{M}(Tz,\rho,ct_0)\ge \Theta_{\mathcal{M}}(Tz,Tx_n,ct_0) \\
&>1-ct_0>1-st_0,
\end{align*}
for any $n\in \mathbb{N}\cup\{0\}$.
In particular, since $Tz$ is compact, there exists some $\rho\in X$ such that
\[ M(\rho,x_n,st_0)=\mathcal{M}(Tz,x_n,st_0)>1-st_0. \]
Since $(x_n)_{n\in\mathbb{N}}$ converges to $z$, for any $\varepsilon\in A_{1,r,s}$, there exists $n_{\varepsilon}\in\mathbb{N}$ such that $M(z,x_n,\varepsilon)>1-\varepsilon$ for any $n\ge n_{\varepsilon}$.
Therefore, we obtain
\begin{align*}
\mathcal{M}(z,Tz,rt_0)&\ge M(z,\rho,rt_0)\ge M(z,x_n,\varepsilon)*M(x_n,\rho,st_0) \\
&\ge (1-\varepsilon)*(1-st_0)\ge (1-\varepsilon)*(1-rt_0).
\end{align*}
If we take the limit as $\varepsilon\to 0$, then by continuity of $*$ we have
\[ \mathcal{M}(z,Tz,rt_0)\ge 1-rt_0. \]
So, we have proved when $k=1$. Next, suppose that the inequality \eqref{cl1} holds for $k=j$.
Then, we will show
\[ \mathcal{M}(z,Tz,r^{j+1}t_0)\ge 1-r^{j+1}t_0. \]
From the assumption of induction, we have
\[ \mathcal{M}(z,Tz,r^jt_0)>1-r^jt_0. \]
Also, since $(x_n)_{n\in\mathbb{N}}$ is a G-Cauchy sequence, there exists $n_j\in\mathbb{N}$ such that 
\[ M(x_n,x_{n+1},r^jt_0)>1-r^jt_0, \]
for all $n\ge n_j$.
Thus, by the definition of $\mathcal{M}$, we have
\[ \mathcal{M}(x_n,Tx_n,r^jt_0)\ge M(x_n,x_{n+1},r^jt_0)>1-r^jt_0. \]
Therefore, from condition \eqref{kannan}, we obtain
\[ \Theta_{\mathcal{M}}(Tz,Tx_n,cr^jt_0)>1-cr^jt_0, \]
for any $n\ge n_j$.
From $s>c$ and non-decreasing property, we have
\[ \Theta_{\mathcal{M}}(Tz,Tx_n,sr^jt_0)>1-sr^jt_0, \]
for any $n\ge n_j$.
Then, we get
\begin{align*}
\mathcal{M}(Tz,x_{n+1},sr^jt_0)&\ge \inf_{\rho\in Tx_{n}}\mathcal{M}(Tz,\rho,sr^jt_0)\ge \Theta_{\mathcal{M}}(Tz,Tx_n,sr^jt_0) \\
&>1-sr^jt_0.
\end{align*}
In particular, since $Tz$ is compact, there exists some $\rho\in X$ such that
\[ M(\rho,x_n,sr^jt_0)=\mathcal{M}(Tz,x_n,sr^jt_0)>1-sr^jt_0. \]
Now let $\varepsilon\in A_{j+1,r,s}$. Then $\varepsilon+sr^jt_0<r^{j+1}t_0$, and there exists $n_{\varepsilon}>n_j$ such that $M(z,x_{n_{\varepsilon}},\varepsilon)>1-\varepsilon$. Therefore,
\begin{align*}
\mathcal{M}(z,Tz,r^{j+1}t_0)&\ge M(z,\rho,r^{j+1}t_0)\ge M(z,x_{n_{\varepsilon}},\varepsilon)*M(x_{n_{\varepsilon}},\rho,sr^jt_0) \\
&\ge (1-\varepsilon)*(1-sr^jt_0)\ge (1-\varepsilon)*(1-r^{j+1}t_0).
\end{align*}
If we take the limit as $\varepsilon\to 0$, then by continuity of $*$ we have
\[ \mathcal{M}(z,Tz,r^{j+1}t_0)\ge 1-r^{j+1}t_0. \]
So, the inequality \eqref{cl1} holds.

Now, given $t>0$, since there exists $k\in\mathbb{N}$ such that $r^kt_0<t$, we have
\[ \mathcal{M}(z,Tz,t)\ge \mathcal{M}(z,Tz,r^kt_0)>1-r^kt_0>1-t. \]
By Lemma \ref{lem1}, we obtain $z\in Tz$. This completes the proof.
\end{proof}

\section*{Declarations}
\subsection*{Conflicts of interests}
The authors declare that there is no conflict of interest regarding the publication of this paper.
\subsection*{Data Availability Statements}
Data sharing is not applicable to this article as no datasets were generated or analyzed during the current study.

\end{document}